  \def\bmw{{\bm w}}
\def\cA{\mathcal{A}}  
  \def\cF{\mathcal{F}}
\def\cG{{\mathcal G}}   
\def\cJ{{\mathcal J}} 
\def\cP{{\mathcal P}} \def\cR{{\mathcal R}}
\def\bbN{\mathbb{N}}
\def\bbZ{\mathbb{Z}}
\def\-{\mathchar`-}
\DeclareMathOperator{\FS}{FS}
\newtheorem{thm}{Theorem}[section]
\newtheorem*{thm*}{Theorem}
\newtheorem{proposition}[thm]{Proposition}
\newtheorem*{fact*}{Fact}
\theoremstyle{definition}
\newtheorem*{defin*}{Definition}
\newtheorem*{observation*}{Observation}
\theoremstyle{remark}
\newtheorem{remark}[thm]{Remark}
\newtheorem*{open*}{Open Problem}
\newtheorem*{prob*}{Problem}
\newtheorem*{conjecture*}{Conjecture}
\numberwithin{equation}{section}
\def\fixed{\mathrm{fixed}}
\def\residual{\mathrm{residual}}
\DeclareMathOperator{\PF}{PF}
\date{}
\title{On the construction of a family of sets of positive integers closed under taking subsets
%On ultimately complete properties 
%from forbidden structures of the set of positive integers
%How to construct homeomorphisms 
%between the power set $2^{\bbN}$ and its subfamilies\\
%- Several solutions to Cameron's problems -
}
\author{Shoichi Kamada\thanks{Supported by JSPS KAKENHI Grant Number 19J00126, Japan and by JST CREST Grant Number JPMJCR2113, Japan.}}
\affil{Institute of Systems and Information Engineering, \\
University of Tsukuba\\
1-1-1, Tennodai, Tsukuba, Ibaraki, Japan\\
kamada.shoichi.ft@u.tsukuba.ac.jp}
\begin{document}

\maketitle
%\subjclass[]{}

\begin{abstract}
In the several contexts such as combinatorial number theory, families of sets of positive integers closed under taking subsets have been investigated.
%To investigate some property of an infinite set of positive integers, 
Then it is sometimes useful to give bijections between 
the set of the one-sided infinite sequences on the alphabet set $\{0,1\}$ and such a family of sets.
The most typical example is the family 
of sum-free sets.
Although such a kind of families covers a large class of families of sets, 
there are only a few considerations on bijections 
for the case where the sum-free property is replaced 
by another property.

In this paper, 
we explicitly give a bijection and its inverse 
between the set of one-sided infinite sequences on the alphabet set $\{0,1\}$
and a family of sets which may be contained in a class of families closed under taking subsets. 
Moreover, we show that some extremal property 
in a particular family of sets is characterized 
by a discrete dynamical system 
based on this kind of bijections. 

\end{abstract}

%\maketitle

\section{Introduction}

Let $\cA$ be a family of sets of positive integers 
and let $\{0,1\}^{\bbN}$ be the set of one-sided infinite sequences on the alphabet set $\{0,1\}$, where $\bbN:=\{1,2,\ldots\}$. 
Throughout this paper, we are interested in the following condition for $\cA$.
%such as sum-free property 
%satisfy the following condition.
\begin{itemize}
    \item[(I)] If $A\in \cA$ and $B\subseteq A$, then $B\in\cA$,  \label{item:ind}
\end{itemize}
i.e. $\cA$ is closed under taking subsets.
Especially, we will discuss on bijections
between $\{0,1\}^{\bbN}$ and $\cA$, 
where $\cA$ may be contained in a class of families satisfying~(I).
With a bijection from $\{0,1\}^{\bbN}$ to $\cA$, 
we shall say that the bijection constructs a family $\cA$ 
if it maps all elements in $\{0,1\}^{\bbN}$ to $\cA$.
In Cameron and Erd\"{o}s's paper \cite{CameronErdos1990},
several examples for families satisfying (I) are found and 
include mainly three parts: additive properties, multiplicative properties and the others.

For several families of sets with additive properties such as sum-free sets and $3$-AP free sets, it is sufficient for the condition (I) 
to avoid a solution of a single linear equation over~$\bbZ$. 
For some general additive properties, 
if a single linear equation is replaced by a system of linear equations, 
then the family of sets satisfying the condition (I) are obtained. 
Notice that the avoidance of a solution of a system of linear equations means the avoidance of a solution of some single linear equation.
As another direction, one can consider the avoidance of a solution of all single linear equations in a finite set of equations. 
Then the obtained family of sets also satisfies the condition (I).

For one multiplicative property, we can consider the pairwise coprimality. 
Then the obtained family of sets satisfies the condition (I).

%Since $\cA$ is supposed to be countably infinite, 
% One may associate $\cA$ satisfying (I)
% with some notion of independence in Mathematical Logic \cite{baldwin_2017}
%\cite{GeoModelTheory_Pillay1996}???????

As a typical example to investigate a bijection that constructs a family of sets, %$\cA$, 
%let $\cA$ be the family of sum-free sets of positive integers.
the family of sum-free sets are investigated rather than the other properties. 
In this case, it plays some role in several literatures such as 
\cite{cameron1985cyclic, cameron1987random, cameron1987portrait, luczak1995sum, calkin1996conditions} 
to give at least one of a bijection from $\{0,1\}^{\bbN}$ to $\cA$ and its inverse, explicitly. %From 
% from the set of one-sided infinite sequences
% on the alphabet set $\{0,1\}$ 
% to $\cA$ explicitly. 
%the family of sum-free sets 
%of positive integers plays some role.%is investigated.
%
% From 1980s, several authors studied around this kind of bijections %homeomorphisms 
% between the set 
% of one-sided infinite sequences
% $\{0,1\}^{\bbN}$ and a family $\cA$ satisfying $\cA$.
%subsets of the power set $2^{\bbN}$.  
%In \cite{cameron1987portrait}, several problems were left. 
%In this paper, we give several answers to Cameron's problems.
%In Cameron's paper \cite{cameron1987portrait}, 
%several subfamilies $\cA$ of the power set $2^{\bbN}$ are investigated to describe mathematical structures, especially in the case that
%$\cA$ is the family of sum-free sets.
%In the case of the family of sum-free sets, 
In particular, 
the Cayley graph on the group~$\bbZ$ whose generating set is a sum-free set 
is related to its cyclic automorphisms \cite{cameron1985cyclic}.

%This kind of families plays a role 
%as ``disorder".

% As a complementary case of the condition (I), 
% we may consider $2^{\bbN}\setminus \cA$ or its subfamily. 
% For a statement 
% that a system of linear equations over $\bbZ$  has a solution in $A\subseteq\bbN$, 
% assume that $\cR\subseteq2^{\bbN}$ is a family of sets that satisfy this statement.
% Then such a kind of families $\cR$ involves Schur's theorem \cite{Schur1917}, van der Waerden's theorem \cite{van_der_Waerden_AP_1927}, and Rado's theorem \cite{rado1933}.
% In this case, it is matched with Ramsey Theory and the theory of ultrafilters on $\bbN$ (e.g. see 
% \cite{graham1991ramsey}).

We have the following three purposes in this paper. 
The first one aims to give a method for the construction of a family $\cA$ satisfying (I).
The second one is that a bijection that constructs the family of sets satisfying the pairwise coprimality 
includes the sieve of Eratosthenes.
The last one is to introduce and investigate a new notion of ultimately completeness in our framework 
for a special class of families $\cA$ through the inverse of a bijection that constructs $\cA$.

% To construct a family $\cA\subseteq 2^{\bbN}$, a homeomorphism $\{0,1\}^{\bbN}\to\cA$ is useful. However, a few of such homeomorphisms are known (e.g. \cite{calkin1996conditions,luczak1995sum})

The organization of this paper is as follows. 
In Section \ref{sec:previous}, we review the previous works due to Cameron \cite{cameron1985cyclic} and due to Calkin and Finch \cite{calkin1996conditions} on bijections between the set of one-sided infinite sequences $\{0,1\}^{\bbN}$
and the family of sum-free sets on positive integers.
In Section \ref{sec:GenFramework}, we introduce 
a general framework to describe bijections between $\{0,1\}^{\bbN}$ 
and a family of sets of positive integers that may be contained in a class of families under taking subsets.   
In Section~\ref{sec:Example}, 
we describe examples of families closed under taking subsets and their bijections. 
Some extremal property appears from a basic viewpoint of dynamical systems that  from some parametrization of bijections in Subsection \ref{subsec:norm_k}.
Moreover, it is shown that a bijection that constructs the family of sets satisfying the pairwise coprimality in our framework includes the sieve of Eratosthenes 
in Subsection~\ref{subsec:coprime}. 
%In Section \ref{sec:ConcludingRemarks-}, we give concluding remarks.

\section{Preliminaries}
\label{sec:preliminaries}
%$3$-term arithmetic progression free sets.

Let $\{0,1\}^{\bbN}$ denote 
the set of one-sided infinite sequences
on the alphabet set $\{0,1\}$. 
In other words, 
let $\{0,1\}^{\bbN}:=\{\sigma=(\sigma_{a})_{a\in\bbN}\colon \sigma_{a}\in\{0,1\}\}$.
Notice that we can describe each of elements in $\{0,1\}^{\bbN}$ 
not only as a sequence $\sigma=(\sigma_{a})_{a\in\bbN}$ but also as a function $\sigma\colon a\mapsto \sigma_{a}$.% for all $a\in \bbN$.

Let $2^{\bbN}$ denote the power set of $\bbN$. 
Then by abuse of notation, 
we denote by $A$ the characteristic function 
of $A\in 2^{\bbN}$, 
i.e. 
% $\{0,1\}^{\bbN}$ and $2^{\bbN}$ has the one-to-one correspondence.
% Indeed, for $A\in 2^{\bbN}$, we can define the characteristic function 
\[
{A}(a):=\left\{
        \begin{array}{cl}
            1 & \text{if } a\in A, \\
            0 & \text{if } a\not\in A. 
        \end{array}
\right.
\]
% Conversely, for $\chi\in \{0,1\}^{\bbN}$,
% we can define a set $A\in2^{\bbN}$ 
% as a support set of $\chi$, i.e.
% \[
% A=\supp \chi =\{a\in\bbN \colon \chi(a)\neq0\}.
% \]

\noindent
To justify this, we may check that 
the power set $2^{\bbN}$ 
has one-to-one correspondence to the set $\{0,1\}^{\bbN}$ of one-sided infinite sequences.

Here, we define a metric $\rho$ on $2^{\bbN}$ by
\begin{equation}\label{}
\rho(A,B)=\left\{
        \begin{array}{cl}
        2^{-N_{0}+1} & \text{if } A\neq B, \\
        0 & \text{if } A=B,
        \end{array}
        \right.
\end{equation}
where $N_{0}$ is the smallest integer such that
\begin{equation*}
A\cap [N_{0}] \neq B\cap [N_{0}].
\end{equation*}

\noindent
i.e.
\begin{equation}\label{eq:order_in_the_metric}
N_{0}:=\min\{a\in\bbN\colon A\cap [a] \neq B\cap [a]\}.
\end{equation}

\noindent
For the characteristic functions $A$ and $B$, 
the definition of $N_{0}$ in \eqref{eq:order_in_the_metric} is replaced by
\[
N_{0}:=\min\{a\in\bbN\colon A(a)\neq B(a)\}.
\]
Throughout this paper, we use this metric to describe some topological properties related to bijections and families of sets.

% \noindent
% For a mapping $\cG\colon\cA\to2^{\bbN}$, 
% we define another metric on $\cA\subseteq 2^{\bbN}$ 
% by \begin{equation}\label{eq:metric_rho_G}
% \rho_{\cG}(A,B):=\rho(\cG(A), \cG B).
% \end{equation}

%\section{Families of sets of positive integers}

\section{Sum-Free Sets}
\label{sec:previous}
In this section,  we review the works due to Cameron \cite{cameron1985cyclic} and due to Calkin and Finch \cite{calkin1996conditions}.
A set $A\subseteq \bbN$ is sum-free 
if it holds that
\begin{equation}\label{eq:sumfree}
A \cap (A+A) =\emptyset,
\end{equation}
where $A+A=\{a+b\colon a,b\in A\}$.
Clearly, the condition \eqref{eq:sumfree} is useful for 
a forbidden structure that 
taking sum is not closed in $A$. In this section, let $\cA\subseteq 2^{\bbN}$ be the family of sum-free sets. 
%This forbidden structure describes a kind of ``disorder".

\paragraph{From $\{0,1\}^{\bbN}$ to $\cA$}
Here we give a bijection from $\{0,1\}^{\bbN}$ to $\cA$, say $\cF$. 
For an arbitrary $\sigma\in\{0,1\}^{\bbN}$, we can give a set $A\in\cA$ with an auxiliary set $B\in2^{\bbN}$
given by the following recursive procedure.
First, we put $A_{0}=\emptyset$ and $B_{0}=\emptyset$.
For each $n=1,2,\ldots$, if $a_{n}$ is the smallest integer which is not in 
\[
A_{n-1}\cup(A_{n-1}+A_{n-1})\cup B_{n-1}, 
\]
then 
\begin{gather*}
A_{n}=\left\{
		\begin{array}{ccc}
		A_{n-1}\cup \{a_{n}\} & \text{if} \ \sigma_{n}=1, \\
		A_{n-1}				& \text{if} \ \sigma_{n}=0,
		\end{array}
\right. 
\intertext{and}
B_{n}=\left\{
		\begin{array}{ccc}
		B_{n-1}				& \text{if} \ \sigma_{n}=1,\\
		B_{n-1}\cup \{a_{n}\} & \text{if} \ \sigma_{n}=0. 
		\end{array}
\right.
\end{gather*}

\noindent
Putting $A=\bigcup_{n=1}^{\infty}A_{n}$ and $B=\bigcup_{n=1}^{\infty}B_{n}$, 
we obtain $A=\cF(\sigma)$.

\paragraph{From $\cA$ to $\{0,1\}^{\bbN}$}

Next, we give the inverse of $F$, say $\cG$.
Let $\cG\colon \cA \to \{0,1\}^\bbN; A\mapsto \sigma$ 
denote a mapping given by the following procedure.
First, we calculate the following ternary sequence 
$\sigma'=\{\sigma_{a}'\}_{a\in\bbN}\in\{0,1,*\}^{\bbN}$.

\[
\sigma'_{a}=\left\{
\begin{array}{cl}
    1 & \text{if}\ a \in A,  \\
    * & \text{if}\ a \in A+A, \\
    0 & \text{otherwise}.
\end{array}\right.
\]

\noindent
Deleting $*$'s in $\sigma'$, we obtain $\sigma=\cG(A)\in\{0,1\}^{\bbN}$.

% \begin{proposition}[\cite{calkin1996conditions}]
% ultimately periodicity on sum-free sets.
% \end{proposition}

Here, we review the ultimately completeness of sum-free sets for short.  
A~sum-free set $A\subseteq \bbN$ is ultimately complete %(with respect to the sum $A+A$) 
if 
for any sufficiently large $c\in \bbN\setminus A$, there exists $a,b\in A$ such that $c=a+b$, i.e. $c\in A+A$. 
\begin{proposition}[\cite{calkin1996conditions}]
A sum-free set $A$ is ultimately complete if and only
if the sequence $\cG(A)\in\{0,1\}^{\bbN}$ contains only finitely many zeros.    
\end{proposition}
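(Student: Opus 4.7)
The plan is to unwind the construction of $\cG$ and observe that the condition on zeros in $\sigma=\cG(A)$ translates directly into the ultimate-completeness condition on $A$. By the definition of $\cG$, the intermediate ternary sequence $\sigma'\in\{0,1,*\}^{\bbN}$ satisfies $\sigma'_a=1$ exactly when $a\in A$, $\sigma'_a=*$ exactly when $a\in A+A$, and $\sigma'_a=0$ exactly when $a\notin A\cup(A+A)$; these three cases are disjoint because $A$ is sum-free. The passage from $\sigma'$ to $\sigma$ only deletes the $*$'s and keeps the $0$'s and $1$'s in their original order, so the zeros of $\sigma$ are in bijection with the set
\[
Z(A):=\{a\in\bbN : a\notin A\cup(A+A)\}.
\]

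From this observation both directions of the equivalence follow by direct translation. The sequence $\sigma$ has only finitely many zeros if and only if $Z(A)$ is finite; since $Z(A)\subseteq\bbN$, this is in turn equivalent to $Z(A)$ being bounded above, i.e.\ to the existence of $N\in\bbN$ such that every $c\geq N$ lies in $A\cup(A+A)$. Rephrased: for every sufficiently large $c\in\bbN\setminus A$ one has $c\in A+A$, which is precisely the definition of ultimate completeness.

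The one point that deserves an explicit check is that $\cG(A)\in\{0,1\}^{\bbN}$ is genuinely an infinite sequence, i.e.\ that the deletion of $*$'s leaves infinitely many entries so that the bijection between zeros of $\sigma$ and elements of $Z(A)$ makes sense. If $A$ is infinite, the $1$'s in $\sigma'$ already supply infinitely many kept positions; if $A$ is finite, then $A+A$ is also finite, so $\sigma'_a=0$ for all sufficiently large $a$ and again infinitely many positions survive. I do not foresee a real obstacle in the argument: once the correspondence between zeros of $\sigma$ and elements of $Z(A)$ is pinned down, the claim reduces to the elementary fact that a subset of $\bbN$ is finite if and only if it is bounded.
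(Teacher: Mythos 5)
Your proof is correct: the zeros of $\sigma=\cG(A)$ are exactly the elements of $\bbN\setminus\bigl(A\cup(A+A)\bigr)$ (the three cases defining $\sigma'$ are disjoint because $A$ is sum-free), and finiteness of that set is precisely the statement that every sufficiently large $c\in\bbN\setminus A$ lies in $A+A$, i.e.\ ultimate completeness. The paper states this proposition as a citation to Calkin and Finch without reproducing a proof, and your direct unwinding of the definition of $\cG$ --- including the check that deleting the $*$'s still leaves an infinite sequence --- is exactly the standard argument.
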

In the next section, we also consider a generalization of ultimately completeness. 
% \begin{remark}
%     $\d$-complete sequence \cite{d_complete_erdos1996}
% \end{remark}

\section{General Framework}\label{sec:GenFramework}

In this section, we consider a generalization of the condition \eqref{eq:sumfree}. 
Intutively, we replace $A+A$ in \eqref{eq:sumfree} 
by another operation. 
Let $\cJ$ denote an operator on the power set $2^{\bbN}$
to describe a forbidden structure, where $\cJ(\emptyset)=\emptyset$.
Then in order to describe a forbidden structure 
with respect to $\cJ$, 
one may directly consider the following condition. 
\[
A\cap\cJ(A)=\emptyset.
\]
However, this fails since there is some possibility such that
\[
A\subseteq \cJ(A)
\]
for any $A\in2^{\bbN}$.
To describe a forbidden structure,  
we need to deal with this case correctly.

Now, we regard a set $A\subseteq \bbN$ 
as an increasing sequence \mbox{$A=\{a_{1}<a_{2}<\cdots\}$.}
For each $i=1,2,\ldots$, we introduce an operator $\cJ_{i}$ on $2^{\bbN}$
defined by 
\begin{equation}\label{eq:cJi}
\cJ_{i}(A):=\{a\in \bbN \colon a_{i}<a<a_{i+1}~\text{and}~a\in \cJ(\{a_{1},\ldots,a_{i}\})\},
\end{equation}
where if $A$ is finite, then for $i\geq |A|$, it is defined by
\[
\cJ_{i}(A):=\{a\in \bbN \colon a_{|A|}<a<\infty~\text{and}~a\in \cJ(\{a_{1},\ldots,a_{i}\})\}.
\]
The operators $\cJ_{i}$'s are useful to give our bijection.
Corresponding to an operator~$\cJ$, we also introduce an operator~$\cJ^{+}$ defined by
\begin{equation}\label{eq:cJ+}
\cJ^{+}(A):=\bigcup_{i=1}^{\infty}\cJ_{i}^{+}(A),
\end{equation}
where
%Corresponding to $\cJ_{i}$, we also define 
\begin{equation}\label{eq:cJi+}
\cJ_{i}^{+}(A):=\{a\in \bbN \colon a_{i}<a<\infty ~\text{and}~a\in \cJ(\{a_{1},\ldots,a_{i}\})\}.
\end{equation}
Then we can give the following condition.
% \begin{equation}\label{eq:forbidden}
% A\cap \left(\bigcup_{i=1}^{\infty}\cJ_{i}^{+}(A)\right)=\emptyset.
% \end{equation}
\begin{equation}\label{eq:forbidden}
A\cap \cJ^{+}(A)=\emptyset.
\end{equation}
It is easy to see that 
the condition \eqref{eq:forbidden} generalizes the condition \eqref{eq:sumfree}. 
Indeed, in the case of $\cJ(A)=A+A$, it holds that 
\[
\cJ^{+}=\cJ
%\quad\text{and}\quad \cJ^{+}=\bigcup_{i=1}^{\infty}\cJ_{i}
\] 
and 
\[
\cJ=\bigcup_{i=1}^{\infty}\cJ_{i}
\]
as the equality of operators. %on the family of sum-free sets. 
In general, it always holds that
\begin{equation*}
A\cap \left(\bigcup_{i=1}^{\infty}\cJ_{i}(A)\right)=\emptyset
\end{equation*}
for any $A\in2^{\bbN}$. %although it does not 
Hence, the condition \eqref{eq:forbidden} can be considered 
as defining a forbidden structure with respect to~$\cJ$.

% For details, we describe in Subsection~\ref{subsec:generalcG}.

We will give examples of $\cJ$ in Section \ref{sec:Example}. %\ref{sec:exampleOp}.
In this section. we shall suppose that 
a subfamily $\cA\subseteq 2^{\bbN}$ satisfies (I).

\paragraph{From $\{0,1\}^{\bbN}$ to $\cA$}

Let  
$\cF\colon \{0,1\}^{\bbN}\to\cA; 
\sigma\mapsto A
$ 
denote a mapping given by the following recursive procedure.
First, we put $A_{0}=\emptyset$, $B_{0}=\emptyset$.
For each $n=1,2,\ldots,$, if $a_{n}$ is the smallest positive integer which is not in 
\[
A_{n-1}\cup \cJ(A_{n-1})\cup B_{n-1},
\]
then 
\begin{gather*}
A_{n}=\left\{
		\begin{array}{ccc}
		A_{n-1}\cup \{a_{n}\} & \text{if} \ \sigma_{n}=1, \\
		A_{n-1}				& \text{if} \ \sigma_{n}=0,
		\end{array}
\right. 
\intertext{and}
B_{n}=\left\{
		\begin{array}{ccc}
		B_{n-1}				& \text{if} \ \sigma_{n}=1,\\
		B_{n-1}\cup \{a_{n}\} & \text{if} \ \sigma_{n}=0. 
		\end{array}
\right.
\end{gather*}

\noindent
Putting $A=\bigcup_{n=1}^{\infty}A_{n}$ and 
$B=\bigcup_{n=1}^{\infty}B_{n}$, 
we obtain $A=\cF(\sigma)$.
\paragraph{From $\cA$ to $\{0,1\}^{\bbN}$}
\label{subsec:generalcG}

Here, %In this subsection, 
we consider the inverse direction of $\cF\colon\{0,1\}^{\bbN}\to\cA$. 
Although 
the inverse of $\cF\colon\{0,1\}^{\bbN}\to\cA$ is formally given by 
$\cG=\cF^{-1}$ as a mapping from $\cA$ to $\{0,1\}^\bbN$ 
in Section \ref{sec:previous}, 
we shall give a mapping $\cG\colon2^{\bbN}\to\{0,1\}^{\bbN}$ such that the restriction of domain of $\cG$ to $\cA$ coincides $\cF^{-1}$
for convinience.

Let $\cG\colon 2^{\bbN} \to \{0,1\}^\bbN; 
A\mapsto \sigma$ 
denote a mapping constructed by the following procedure.

First, we calculate the following ternary sequence 
$\sigma'=\{\sigma_{a}'\}_{a\in\bbN}\in\{0,1,*\}^{\bbN}$.

\[
\sigma'_{a}=\left\{
\begin{array}{cl}
    1 & \text{if}\ a \in A,  \\
    * 
    & \text{if}\ a \in \cJ_{i}(A)\ \text{for some $i$}, \\
    0 & \text{otherwise}.
\end{array}\right.
\]

%$\cJ_{i}(A):=\{a:\text{$a_{i}<a<a_{i+1}$ and 
%$a\in \cJ(\{a_{1},\ldots,a_{i}\})$} \}$.
%$\bbN=\bigcup_{i=1}^{\infty(\cJ_{i}(A)\cup\{a_{i}\})$

\noindent
Deleting $*$'s in $\sigma'$, we obtain $\sigma=\cG(A)\in\{0,1\}^{\bbN}$.

Notice that \eqref{eq:forbidden} always holds.
%Then we have 
Then the following theorems are direct consequences.
% \begin{thm}

% \end{thm}

\begin{thm}
Given the relative topology on $\cA$,
the following statements hold.
\begin{itemize}
\item A mapping $\cF\colon\{0,1\}^{\bbN}\to\cA $ 
is a continuous bijection.
%and its inverse is $\cG\colon \cA \to \{0,1\}^\bbN$.
\item A mapping ${\cG}\colon 2^{\bbN} \to \{0,1\}^\bbN$ is a continuous surjection.
\item The restriction of the domain of $\cG$ to $\cA$ 
is a continuous bijection. Consequently, $\cF$ is a homeomorphism.
\end{itemize}
\end{thm}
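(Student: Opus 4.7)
The plan is to exploit the coordinate-local, recursive structure of both $\cF$ and $\cG$ in the metric $\rho$, together with the identity $\cF\circ\cG|_{\cA}=\mathrm{id}_{\cA}$ established by a single induction. The role of the hypothesis that $\cA$ is a perfect subset of $(2^{\bbN},\rho)$ is, I expect, limited to guaranteeing that $\cG$ actually takes values in the infinite-sequence space $\{0,1\}^{\bbN}$ rather than in a space of possibly-terminating sequences.

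First I would prove continuity of $\cF$. The candidates $a_1<a_2<\cdots$ produced while constructing $\cF\sigma$ form a strictly increasing sequence, because after step $i$ one has $a_i\in A_i\cup B_i$ and so $a_i$ is excluded from every later choice; in particular $a_n\geq n$. If $\sigma,\tau\in\{0,1\}^{\bbN}$ agree on the first $n$ coordinates, then the recursions coincide through step $n$, producing identical $A_n$ and $B_n$. Since every later candidate exceeds $a_n$, no integer $\leq a_n$ has its membership in the final set altered past step $n$; consequently $\cF\sigma\cap[a_n]=A_n\cap[a_n]=\cF\tau\cap[a_n]$, so $\rho(\cF\sigma,\cF\tau)\leq 2^{-n+1}$. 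Injectivity follows by the same ``first difference'' argument: if $n$ is the smallest index with $\sigma_n\neq\tau_n$, both runs agree through step $n-1$ and share the candidate $a_n$, then disagree on whether $a_n$ is placed in $A$.

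Next I would verify the identity $\cF(\cG(A))=A$ for $A\in\cA$ by a single induction on $n$: the $n$-th candidate produced by $\cF$ on $\sigma=\cG(A)$ is precisely the $n$-th non-$*$ position of the ternary sequence $\sigma'$ associated to $A$, and the recorded value $\sigma_n$ matches the indicator of that position in $A$. The $*$-positions of $\sigma'$ are exactly the members of $\bigcup_i\cJ_i(A)$, which under \eqref{eq:forbidden} are disjoint from $A$ and are precisely the integers the $\cF$-recursion is designed to skip. This simultaneously shows that $\cF$ maps onto $\cA$ and that $\cG|_{\cA}$ is its two-sided inverse. The perfectness hypothesis enters here only to rule out $A\in\cA$ whose $\sigma'$ contains only finitely many non-$*$ entries; such an $A$ would be isolated in $\cA$ because condition (I) would force every nearby $A'\in\cA$ agreeing with $A$ up to the last non-$*$ position to coincide with $A$ on the entire tail of $*$'s. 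Perfectness therefore guarantees $\cG(A)\in\{0,1\}^{\bbN}$ for every $A\in\cA$.

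Finally I would establish continuity of $\cG\colon 2^{\bbN}\to\{0,1\}^{\bbN}$. The observation is that $\sigma'_a$ depends only on $A\cap[a]$, since membership $a\in A$ and the conditions $a\in\cJ_i(A)$ with $a_i<a$ involve only elements of $A$ in $[a]$. Hence if $A,A'$ agree on $[N]$ then their ternary sequences coincide on $[N]$, and after deletion of $*$'s the outputs $\cG(A)$ and $\cG(A')$ agree on their first $k_N(A)$ coordinates, where $k_N(A)$ counts the non-$*$ entries of $\sigma'$ in $[N]$. Because $k_N(A)\to\infty$ with $N$, $\cG$ is continuous at $A$; surjectivity of $\cG$ is automatic from the dual identity $\cG(\cF\sigma)=\sigma$, verified by the same inductive match of candidates. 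Combining the three ingredients, $\cG|_{\cA}$ is a continuous two-sided inverse of $\cF$, so $\cF$ is a homeomorphism. The step I expect to occupy most attention is making the perfectness/non-isolation equivalence rigorous under the generality assumed for $\cJ$, since without an auxiliary hypothesis such as monotonicity of $\cJ$ under inclusion the correspondence between isolated points of $\cA$ and sequences $\sigma'$ with only finitely many non-$*$ entries demands a careful, case-splitting argument.
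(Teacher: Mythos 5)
The paper offers no proof of this theorem --- it is asserted as a ``direct consequence'' of the two constructions --- so your reconstruction is, in spirit, the argument the author must intend: coordinate-locality of both recursions with respect to the metric $\rho$, plus the two identities $\cG(\cF\sigma)=\sigma$ and $\cF(\cG(A))=A$. However, there is a genuine gap at your very first step, and you have mislocated the place where an extra hypothesis on $\cJ$ is needed. You assert that the candidates $a_1<a_2<\cdots$ are strictly increasing ``because after step $i$ one has $a_i\in A_i\cup B_i$''; that argument only shows the candidates are pairwise distinct. For $a_{n+1}>a_n$ you need every integer $b<a_n$ that was excluded at step $n$ to remain excluded at step $n+1$, i.e. $\cJ(A_{n-1})\cap[a_n]\subseteq\cJ(A_n)$, and in the paper's general framework nothing beyond $\cJ(\emptyset)=\emptyset$ is assumed of $\cJ$. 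Without monotonicity of $\cJ$ under inclusion a previously forbidden integer can become admissible again, a later candidate can fall below $a_n$, and then your key continuity claim $\cF\sigma\cap[a_n]=A_n\cap[a_n]$ fails. The same monotonicity is what makes $\cF\sigma$ satisfy \eqref{eq:forbidden} at all (each $a_n$ is chosen outside $\cJ(A_{n-1})$, but it must stay outside the relevant $\cJ_i$ of the \emph{final} set), and you never verify that $\cF$ maps \emph{into} $\cA$ --- the identity $\cF\circ\cG|_{\cA}=\mathrm{id}_{\cA}$ only gives ``onto''. You flag monotonicity only in connection with the perfectness/isolated-point discussion, but it is already indispensable for the bijectivity and continuity of $\cF$.

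Two further well-definedness points are skipped by both you and the paper: the recursion for $\cF$ could stall if $A_{n-1}\cup\cJ(A_{n-1})\cup B_{n-1}=\bbN$ at some step, and for an arbitrary $A\in 2^{\bbN}$ (not just $A\in\cA$) the ternary sequence $\sigma'$ could have only finitely many non-$*$ entries, in which case $\cG(A)$ is not an element of $\{0,1\}^{\bbN}$ and your continuity argument for $\cG$, which needs $k_N(A)\to\infty$, breaks. Perfectness of $\cA$ does not help there, because the second bullet of the theorem is a statement about all of $2^{\bbN}$. These are exactly the hypotheses one would have to add (monotone $\cJ$; no $\cJ_i(A)$ cofinite) to turn your outline --- which is otherwise the right outline --- into a proof.
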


From this theorem, we see that $\cA$ is a Cantor space.

%Suppose that $\cA$ is a perfect set. 

With operators $\cJ$ and $\cJ_{i}$'s, we can introduce a kind of ultimately completeness.  
A set $A\in\cA$ is ultimately complete with respect to $\cJ$
if for any $a\in \bbN\setminus A$, it holds that $a\in \bigcup_{i=1}^{\infty}\cJ_{i}(A)$.
In terms of a mapping $\cF$, a set $A=\cF(\sigma)$ for $\sigma\in\{0,1\}^{\bbN}$ is ultimately complete if $\sigma$ is of the form $\bmw\|\bm{1}_{\infty}$, a concatenation of a finite sequence $\bmw$ and the all-one infinite sequence $\bm{1}_{\infty}$.

We will see examples in the next section.

\section{Examples}\label{sec:Example}

\subsection{A finite set of linear equations}

As described in Introduction, 
the logical conjunction/disjunction for a finite set of linear equations give rise to the condition (I).
The logical conjunction for a finite set gives a system of linear equations. 
As a simple example, it can be considered that
\[
x_{1}+x_{2}=x_{3} \quad \text{and} \quad x_{2}-2x_{3}+x_{4}=0
\]
is a system of linear equations. 
Then the operator $J$ is given by
\[
J(A)=\bigcup_{i=1}^{4}\{x_{i}\colon x_{1}+x_{2}=x_{3}, \ x_{2}-2x_{3}+x_{4}=0,\ x_{j}\in A\ (j\neq i)\}
\]

The logical disjunction for a finite set is that ``and" in a system of linear equations replaced by ``or". That is, the above example is changed to 
\[
x_{1}+x_{2}=x_{3} \quad \text{or} \quad x_{2}-2x_{3}+x_{4}=0
\]
Then the operator $J$ is given by
\[
J(A)=\bigcup_{i=1}^{4}(X_{i}\cup Y_{i})
,
\]
where $X_{i}=\{x_{i}\colon x_{1}+x_{2}=x_{3},\ x_{j}\in A\ (j\neq i)\}$
and $Y_{i}=\{x_{i}\colon x_{2}-2x_{3}+x_{4}=0,\ x_{j}\in A\ (j\neq i)\}$.

We will give more discussion on the logical disjunction for a finite set in Section~\ref{subsec:norm_k}.

Here consider a finite set $L$ of linear equations with the logical conjunction/disjunction containing
\[
x-2y=0,
\]
and define the operator $\cJ$ as above. Then we give the following characterization for a mapping $\cF$ and $\cA$.
\begin{proposition}\label{prop:fixed_point}
Let $A=\{a_{1}<a_{2}<\cdots\}\in\cA$. 
Assume that
\[
\cF(A)=A,
\]
%for some $A\in \cA_{k}$, 
i.e. $A$ is a fixed point of $\cF$.
Then $A$ is a finite set. Moreover, $\max A<2a_{1}$. 
\end{proposition}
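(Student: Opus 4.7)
The plan is to translate the fixed-point equation $\cF_{k}(A)=A$ into a step-by-step constraint on the greedy procedure defining $\cF_{k}$, and then obstruct it at the single position $n=2a_{1}$ via the relation $-2\cdot a_{1}+1\cdot(2a_{1})=0$.

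First I would set up notation. Let $\sigma\in\{0,1\}^{\bbN}$ denote the characteristic function of $A$, and write $x_{n}$ (rather than $a_{n}$, to avoid a clash with the statement) for the integer chosen at step $n$ while computing $\cF_{k}(\sigma)$. Because exactly one new element is appended to $A_{n-1}\cup B_{n-1}$ at each step, the $x_{n}$ are strictly increasing and satisfy $x_{n}\ge n$, while the output of $\cF_{k}$ equals $\{x_{n}:\sigma_{n}=1\}=\{x_{n}:n\in A\}$. The fixed-point hypothesis therefore reads $\{x_{n}:n\in A\}=A$, so $n\mapsto x_{n}$ restricted to $A$ is a strictly increasing injection of $A$ onto $A$; being an order-isomorphism it must be the identity, i.e.\ $x_{a_{i}}=a_{i}$ for each $i$. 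A cardinality count on each open gap $(a_{i},a_{i+1})$, together with the gap below $a_{1}$, upgrades this to $x_{n}=n$ for every $n\le \sup A$.

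Next I would unpack what $x_{n}=n$ means. Once $x_{j}=j$ is known for all $j<n$, one has $A_{n-1}\cup B_{n-1}=\{1,\dots,n-1\}$ and $A_{n-1}=A\cap[n-1]$, so $x_{n}=n$ is equivalent to $n\notin \cJ^{(k)}(A\cap[n-1])$, and this must hold for every $n\le\sup A$. Supposing towards a contradiction that $\sup A\ge 2a_{1}$ and applying this at $n=2a_{1}$ would give $2a_{1}\notin \cJ^{(k)}(A\cap[2a_{1}-1])$. However, the coefficients $y_{a_{1}}=-2$ and $y_{2a_{1}}=1$ satisfy
\[
(-2)\,a_{1}+(1)(2a_{1})=0, \qquad (-2)^{2}+1^{2}=5<7\le k,
\]
so $2a_{1}\in\cJ^{(k)}(\{a_{1}\})$; extending any such witness by zero coordinates shows $\cJ^{(k)}$ is monotone under set inclusion, and combined with $\{a_{1}\}\subseteq A\cap[2a_{1}-1]$ this yields $2a_{1}\in\cJ^{(k)}(A\cap[2a_{1}-1])$, contradicting the previous line. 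Hence $\sup A<2a_{1}$, so $A\subseteq[a_{1},2a_{1}-1]$ is finite with $\max A<2a_{1}$.

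The main obstacle I expect is the identification $x_{n}=n$ for $n\le\sup A$, which rests on the order-preserving bijection argument combined with a gap-by-gap cardinality check. With that in hand, exhibiting the norm-$5$ obstruction at $2a_{1}$ is essentially immediate, and the role of the threshold $k\ge 7$ reduces to ensuring that the single certificate $(-2,1)$ has small enough squared norm.
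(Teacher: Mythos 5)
Your proof is correct and rests on the same key fact as the paper's: the relation $-2\,a_{1}+1\cdot(2a_{1})=0$ of squared norm $5<k$ forces $2a_{1}$ to be starred once $a_{1}$ has been selected, so the greedy enumeration skips $2a_{1}$ and the fixed-point equation cannot survive past that position. Your write-up is in fact considerably more careful than the paper's own argument (which only exhibits the two-element instance $\cF_{k}(\{a_{1},2a_{1}\})=\{a_{1},2a_{1}+1\}$ and then asserts the general conclusion); the identification $x_{n}=n$ for all $n\le\sup A$ via the order-isomorphism and gap count is precisely the bookkeeping the paper leaves implicit.
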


\begin{proof}
%{(1)(2)}
%Let $A=\{a_{1}<a_{2}<\cdots\}\subseteq \bbN$.
%We may show that $A$ is finite.
If $A$ is a singleton, then the statement holds. 
%Especially, it is maximal for $A=\{1\}$ and $A=\{2\}$ to hold the statement.
If $|A|\geq 2$, then the relations $a_{i}-2a_{1}=0$ does not always hold for all $i\geq 2$. This implies that 
\begin{equation}\label{eq:two_el_ex}
\cF(\{a_{1}<\cdots<2a_{1}\})=\{a_{1}<\cdots<b\},\quad b\geq 2a_{1}+1.
\end{equation}
%We can similarly consider $a_{i}\geq 2a_{1}$ ($i\geq 2$)
Consequently, a necessary condition for $\cF(A)=A$ is that $A$ does not contain integers greater than or equal to $2a_{1}$. This concludes the statement.

\end{proof}

\subsection{Finite sums}
For a set $A\subseteq\bbN$, 
the set $\FS(A)$ of finite sums of $A$ is defined by
\[
\FS(A):=\{\sum_{a\in S} a\colon S\subseteq A\, \text{is nonempty and finite.}\}.
\]
A set $A\subseteq\bbN$ is 
ultimately complete if all of sufficiently large elements in $\bbN\setminus A$ are representable 
as elements in $\FS(A)$.

%\item

% \paragraph{finite products} $\cJ(A):=FP(A)$, where  
% \[
% FP(A):=\{\prod_{a\in S} a\colon S\subseteq A\ \text{is nonempty and finite.}\}.
% \]
% Notice that $FP(A)=\bigcup_{1\leq k<\infty} A^{(k)^{*}}$
% holds.

% $\cJ(A)=\{\prod_{p\in PF(A)}p^{x_{p}}\colon x_{p}\in\bbZ_{\geq 0}\}$, 
% where 
% \[
% PF(A):=\{p\colon \text{$p$ is a prime number and $p|a$ for some $a\in A$}\}
% \]

\begin{remark}
    There may exists an operator $\cJ$ such that a family $\cA$ constructed by the bijection $\cF$ does not satisfy the condition (I). 
    One of possibilities is the case where $\cJ=\FS$.
\end{remark}
%\end{description}

\subsection{Norm $k$ mappings}\label{subsec:norm_k}

%\subsubsection{}

%Let $k$ be a fixed positive integer. 

%Let $A\subseteq \bbN$ and 
Let $I$ be an index set and $a,k\in\bbN$. 
To introduce an operator parameterized by $k$, 
we give the following condition for some $((y_{b})_{b\in A},w_{a})\in \bbZ^{A}\times \bbZ$:
%$\{y_{b,i}\}_{b\in A}\in \bbZ^{A}$  and $w_{a,i}\in\bbZ$ :
%$\{y_{b}\}_{b\in A\cup \{a\}}\in \bbZ^{A\cup \{a\}}$:
\begin{eqnarray}\label{eq:cont_ext1}
%\left\{
%\begin{array}{cc}
% \sum_{b\in A\cup\{a\}} y_{b}^2< k,\qquad
% \sum_{b\in A\cup\{a\}} y_{b}b=0,\qquad y_{a}\neq 0. \\
\sum_{b\in A} y_{b}^2+w_{a}^2< k,\qquad
\sum_{b\in A} y_{b}b+w_{a}a=0, \qquad w_{a,i}\neq 0.
%\end{array}
%\right.
\end{eqnarray}
%where $|I|=|\{(x_{0},x_{1},\ldots,x_{|A|})\in\bbZ^{|A|+1}\colon \sum_{j=0}^{|A|} x_{j}^2< k\}|$ holds.
Then we define an operator $\cJ^{(k)}$ on $2^{\bbN}$ by
\[
%\cJ^{(k)}(A):=\{a\in\bbN\colon \text{${}^\exists ((y_{b})_{b\in A},w_{a})\in \bbZ^{A}\times \bbZ$ 
%s.t. \eqref{eq:cont_ext1} holds}\}
\cJ^{(k)}(A):=\{a\in\bbN\colon \text{Some $((y_{b})_{b\in A},w_{a})\in \bbZ^{A}\times \bbZ$ 
satisfies \eqref{eq:cont_ext1}.}\}
%\cJ^{(k)}(A):=\{a\in\bbN\colon \text{${}^\exists(y_{b})_{b\in A\cup\{a\}}\in \bbZ^{A\cup\{a\}}$ 
%s.t. \eqref{eq:cont_ext1} holds}\}
\]
%$A=\{a_{1}<a_{2}<\cdots\}$, 
and an operator $\cJ_{i}^{(k)}$ by
\begin{equation*}
\cJ_{i}^{(k)}(A):=\{a\in \bbN\colon a_{i}<a<a_{i+1}~\text{and}~a\in \cJ^{(k)}(\{a_{1},\ldots,a_{i}\}) \}
\end{equation*}
for each $i=1,2,\ldots$. Operators $\cJ^{(k)+}$ and $\cJ_{i}^{(k)+}$ are also similar.
To parameterize mappings $\cF$ and $\cG$, 
we can give $\cF_{k}\colon 2^{\bbN}\to \cA_{k}$ and $\cG_{k}\colon \cA_{k}\to 2^{\bbN}$. 
To describe forbidden structures, we can give the following conditions for all $k$.
\begin{equation}\label{eq:norm_k_J_condition}
%A \cap \left(\bigcup_{i=1}^{\infty}\cJ_{i}^{(k)+}(A)\right)=\emptyset. 
A \cap \cJ^{(k)+}(A)=\emptyset. 
\end{equation}
We say that a set $A\subseteq \bbN$ has norm at least $k$ if the condition \eqref{eq:norm_k_J_condition} holds. %if there is no solution to all single linear equations with norm strictly less than $k$.
%has a solution to a linear equation
%we define 
%$\cA_{k}=\{A\in 2^{\bbN}\colon \lambda_{1}(L(A))^2\geq k \}$.
Thus, we define $\cA_{k}:=\{A\in 2^{\bbN}\colon \text{$A$ has norm at least $k$}\}$. 
Clearly, we have $\cA_{k}\supseteq \cA_{k+1}$ for any $k$.

% \begin{lem}
% Let $k\geq 7$ and let $A=\{a_{1}<a_{2}<\cdots<a_{s}\}\in\cA_{k}$. 
% Assume that $a_{i}-a_{i-1}<2(a_{i+1}-a_{i})$ for all $i=1,\ldots,s-1$. 
% Then $\cF_{k}A=A$.
% \end{lem}

Since $\cA_{k}\subseteq 2^{\bbN}$ and $\{0,1\}^{\bbN}$ is identified with $2^{\bbN}$, 
both of $\cF_{k}\colon \{0,1\}^{\bbN}\to \cA_{k}$ and $\cG_{k}\colon 2^{\bbN}\to \{0,1\}^{\bbN}$ can be regarded as discrete dynamical systems. 

% \begin{proposition}\label{prop:fixed_point}
% Let $k\geq 7$ and let $A=\{a_{1}<a_{2}<\cdots\}\in\cA_{k}$. 
% Assume that
% \[
% \cF_{k}(A)=A,
% \]
% %for some $A\in \cA_{k}$, 
% i.e. $A$ is a fixed point of $\cF_{k}$.
% Then $A$ is a finite set. Moreover, $\max A<2a_{1}$. 
% \end{proposition}

% \begin{proof}
% %{(1)(2)}
% %Let $A=\{a_{1}<a_{2}<\cdots\}\subseteq \bbN$.
% %We may show that $A$ is finite.
% If $A$ is a singleton, then the statement holds. 
% Especially, it is maximal for $A=\{1\}$ and $A=\{2\}$ to hold the statement.
% If $|A|\geq 2$, then the relations $a_{i}-2a_{1}=0$ does not always hold for all $i\geq 2$. This implies that 
% \begin{equation}\label{eq:two_el_ex}
% \cF_{k}(\{a_{1},2a_{1}\})=\{a_{1},2a_{1}+1\}.
% \end{equation}
% %We can similarly consider $a_{i}\geq 2a_{1}$ ($i\geq 2$)
% Consequently, a necessary condition for $\cF_{k}(A)=A$ is that $A$ does not contain integers greater than or equal to $2a_{1}$. This concludes the statement.

% \end{proof}

Here, we introduce a kind of ultimately completeness.  Let $k$ be fixed. Then
a set $A\in\cA_{k}$ is ultimately complete 
if for all sufficiently large $a\in \bbN\setminus A$, it holds that $a\in \bigcup_{i=1}^{\infty}\cJ_{i}^{(k)}(A)$.

%${\cG_{k}},{\cG_{k}}^{2},{\cG_{k}}^{3},\ldots$
% \begin{lem}
% For every ultimately complete $A\in\cA_{k}$, there exists a positive integer $n_{0}$ such that ${\cG_{k}}^{n}A={\cG_{k}}^{n-1}A$ holds for all $n\geq n_{0}$.
% \end{lem}
% \begin{proof}
%      such that the only $n_{0}$ distinct patterns appear in the first finite iterations $\{{\cG_{k}}^{n} A\}_{n=0}^{n_{0}-1}$. 
% Hence, 
% \end{proof}

% ${\cG_{k}}^{l} A =(A\cap [1,N_{0}])\cup (A\cap [N_{0}+1,\infty))$

% From the above lemma,
% we have the following proposition.

\begin{proposition}
Let $k\geq 7$ and $A=\{a_{1}<a_{2}<\cdots\}\in 2^{\bbN}$ be infinite. Then the following statements hold.
\begin{enumerate}[(1)]
    \item A limit $A_{\infty}:=\lim\limits_{n\to\infty}\cG_{k}^{n}(A)\in 2^{\bbN}$ always exists.
    \item There exists a partition $A_{\infty}=A_{\fixed}\cup A_{\residual}$ %$A=(A\cap\bbN_{\leq n_{0}})\cup \bbN_{> n_{0}}$ 
    such that 
    %$\cF_{k}(A\cap\bbN_{\leq n_{0}})=A\cap\bbN_{\leq n_{0}}$, i.e. $A\cap\bbN_{\leq n_{0}}$ is a fixed point of $\cF_{k}$.
    $\cG_{k}(A_{\fixed})=A_{\fixed}$, i.e. $A_{\fixed}$ is a fixed point of $\cG_{k}$. Moreover, we can take $A_{\fixed}$ such that $A_{\fixed}\cap\{1,\ldots,2a_{1}-1\}$ is a fixed point of $\cF_{k}$.
\end{enumerate}
\end{proposition}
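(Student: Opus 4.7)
The plan is to establish part~(1) by a coordinate-wise monotonicity argument for the iterates $A^{(m)}:=\cG_k^m(A)$, and then derive part~(2) by combining continuity of $\cG_k$ with the $\cG_k$-fixed-point property of $A_\infty$ to identify a natural initial segment of $A_\infty$.

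For part~(1), writing each $A^{(m)}=\{a_1^{(m)}<a_2^{(m)}<\cdots\}$, I would first observe that one application of $\cG_k$ never increases any coordinate: unrolling the definition (replace each position by $0/1/*$ and delete all $*$'s), the $n$-th $1$ sitting at position $a_n^{(m)}$ moves to position $a_n^{(m)}-\#\{*\text{'s in }[a_n^{(m)}]\}\le a_n^{(m)}$, whence $a_n^{(m+1)}\le a_n^{(m)}$. Combined with the trivial lower bound $a_n^{(m)}\ge n$, each $(a_n^{(m)})_{m\ge 0}$ is a non-increasing sequence of positive integers bounded below, hence stabilizes at some $a_n^{(\infty)}\ge n$; I set $A_\infty:=\{a_n^{(\infty)}:n\in\bbN\}$. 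To upgrade this coordinate-wise stabilization to convergence in~$\rho$, given $N$ I pick $n_0>N$; the bound $a_{n_0}^{(m)}\ge n_0>N$ confines $A^{(m)}\cap[N]$ to the first $n_0-1$ coordinates, each of which has already stabilized for $m$ large enough, so $A^{(m)}\cap[N]=A_\infty\cap[N]$ for sufficiently large~$m$.

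For part~(2), continuity of $\cG_k$ (from the theorem following Subsection~\ref{subsec:generalcG}) together with $A^{(m+1)}=\cG_k(A^{(m)})\to A_\infty$ yields $\cG_k(A_\infty)=A_\infty$, equivalently $\cJ_i^{(k)}(A_\infty)=\emptyset$ for every $i\ge 1$. I would then consider the chain of initial segments $P_j:=\{a_1^{(\infty)},\ldots,a_j^{(\infty)}\}$ of $A_\infty$. Since $\cJ_i^{(k)}(P_j)=\cJ_i^{(k)}(A_\infty)=\emptyset$ for $i<j$, unwinding the greedy construction of $\cF_k$ shows that $P_j$ is a fixed point of $\cF_k$ iff $P_j\in\cA_k$, i.e., no $a_{j'}^{(\infty)}$ lies in $\cJ^{(k)}(P_{j'-1})$ for $j'\le j$. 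I then take $A_{fixed}:=P_{j^*}$ with $j^*$ the largest $j$ satisfying $P_j\in\cA_k$; such a maximum exists because $P_1=\{a_1\}$ trivially lies in $\cA_k$ and Proposition~\ref{prop:fixed_point} bounds any $\cF_k$-fixed point by $\max<2a_1$, whence $j^*\le|A_\infty\cap[2a_1-1]|$. With this choice $A_{fixed}\subseteq[2a_1-1]$, so $A_{fixed}\cap\{1,\ldots,2a_1-1\}=A_{fixed}$ is a fixed point of $\cF_k$ by construction; furthermore, the emptiness $\cJ_i^{(k)}(A_{fixed})=\emptyset$ for $i<j^*$ kills all $*$'s in the $\sigma'$-sequence below $\max A_{fixed}$ while positions above carry only $0$'s and $*$'s, so $*$-deletion yields $\cG_k(A_{fixed})=A_{fixed}$. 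The partition is $A_\infty=A_{fixed}\sqcup(A_\infty\setminus A_{fixed})$.

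The main obstacle I expect is the crux equivalence used in part~(2), namely that for a prefix $P_j$ of $A_\infty$ already satisfying $\cJ_i^{(k)}(P_j)=\emptyset$ for $i<j$, the condition $\cF_k(\text{char of }P_j)=P_j$ reduces to $P_j\in\cA_k$. This requires a careful induction on the greedy pointer $c_n$ of the $\cF_k$-procedure, tracking when an integer in $[1,\max P_j]$ can be swallowed by $\cJ^{(k)}$ of a previously assembled prefix and thereby disrupt the equality $c_n=n$ needed for the output to recover~$P_j$.
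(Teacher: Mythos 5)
Your argument is correct and, for part~(1), essentially the paper's: the paper tracks the differences $a_{i+1}^{(n)}-a_i^{(n)}$ and notes they are non-increasing, while you track the positions $a_n^{(m)}$ themselves; both rest on the same observation that deleting $*$'s can only shift elements to the left, and your version (coordinates are non-increasing, bounded below by $n$, hence stabilize, and stabilization of finitely many coordinates controls $A^{(m)}\cap[N]$) is the cleaner and more complete write-up. For part~(2) you take a genuinely different route. The paper argues locally: it revisits the computation $\cG_k(\{a,2a+1,\ldots\})=\{a,2a,\ldots\}$ and extracts a fixed initial block below $2a_1^{(n)}$ by hand, leaving the general case as an analogy. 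You instead pass continuity of $\cG_k$ through the limit to get the stronger statement $\cG_k(A_\infty)=A_\infty$ (so all $\cJ_i^{(k)}(A_\infty)$ are empty), and then select $A_{fixed}$ as the longest prefix of $A_\infty$ lying in $\cA_k$, using Proposition~\ref{prop:fixed_point} to bound its length by $|A_\infty\cap[2a_1-1]|$. This buys a uniform argument with no case analysis, and it exposes a fact the paper's phrasing obscures, namely that the whole limit set is already a $\cG_k$-fixed point, so the real content of part~(2) is only the $\cF_k$-fixed-point property of the initial segment. Two caveats you should make explicit. First, the continuity of $\cG_k$ is stated in the paper only under the hypothesis that $\cA$ is a perfect set, which is never verified for $\cA_k$; you should instead check directly that $\cG_k(A)\cap[M]$ is determined by $A\cap[N]$ once $[N]$ contains at least $M$ elements of $A$, which is available because every $A^{(m)}$ is infinite. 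Second, you are reading ``$P_j\in\cA_k$'' as the element-wise condition $a_{j'}\notin\cJ^{(k)}(\{a_1,\ldots,a_{j'-1}\})$ for $j'\le j$; the literal condition \eqref{eq:norm_k_J_condition} is vacuous (each $\cJ_i^{(k)}(A)$ is by definition disjoint from $A$), so your reading is the only one that makes the crux equivalence, and Proposition~\ref{prop:fixed_point}, meaningful --- state it as the definition you are using. With those two points nailed down, and the greedy-pointer induction you sketch carried out (it does close: $c_n=n$ for all $n\le\max P_j$ is equivalent to the element-wise membership condition together with emptiness of the $\cJ_i^{(k)}(P_j)$ for $i<j$), your proof is complete.
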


\begin{proof}
    (1)
    Consider a dynamics $\{\cG_{k}^{n}(A)\}_{n=0}^{\infty}$. For convinience, put $\cG_{k}^{n}(A)=\{a_{1}^{(n)}<a_{2}^{(n)}<\cdots\}$.
    %Suppose that the domain of $\cG_{k}$ is $2^{\bbN}$ and the range of $\cG_{k}$ is $\{0,1\}^{\bbN}$.
    For any fixed $i$, a sequence of differences $\{a_{i+1}^{(n)}-a_{i}^{(n)}\}_{n}$ monotonically decreases as $n\to\infty$ %. by iterating $\cG_{k}$ 
    in a wide sense and appears the same difference ultimately. 
    Hence, a limit always exists.

    \noindent
    (2) From the fact of Proposition \ref{prop:fixed_point}, we take a look at elements in $\cG_{k}^{(n)}(A)$ less than $2a_{1}^{(n)}$.
    For a mapping $\cG_{k}$, we can also give a similar description of \eqref{eq:two_el_ex}, i.e. for $a\geq 3$, we obtain
    \[
    \cG_{k}(\{a,2a+1,\ldots\})=\{a,2a,\ldots\},
    \]
    where ``$\ldots$" in the above means elements other than the first and second smallest positive integers in each set.
    Indeed, the number of $*$'s between $a$ and $2a+1$ is 
    \[
    |\cJ_{1}^{(k)}(\{a,2a+1,\ldots\})|=|\{2a\}|=1.
    \]
    In this case, we have $A_{\fixed}=\{a,2a\}$ and $A_{\residual}:=A_{\infty}\setminus A_{\fixed}$ with $\min A_{\residual}\geq 2a_{1}$. 
    %Similarly, we obtain $\cG_{k}^{2}(\{a,2a+1,\ldots\})=\{a,2a-1,\ldots\}$ by applying $\cG_{k}$ again. 
    Consider a general $A=\{a_{1}<a_{2}<\cdots\}\in 2^{\bbN}$. 
    %For any index $i$, there always exists another index $j>i$ such that $a_{j}\geq 2a_{i}$.
    %If $n\geq 2a_{i}$, then $a_{i+1}^{(n)}<2a_{i}^{(n)}$. %a set $\cG_{k}^{n}A$. 
    %So,  
    This means that $a_{i_{0}}<2a_{1}$ for some $i_{0}\geq 3$. 
    We can similarly take $A_{\fixed}$ of cardinality $\geq 3$ and take $\max A_{\fixed}\leq 2a_{1}$, and $A_{\residual}:=A_{\infty}\setminus A_{\fixed}$. 
    %So, we take iterations by $\cG_{k}$ several times
    % For some positive integer~$n$, 
    % we can see that %a set $\cG_{k}^{n}A=\{a_{1}'<a_{2}'<\cdots\}$ 
    % differences of first several consecutive terms $a_{i+1}'-a_{i}'$ are all distinct.
    
%    $\sum_{j=1}^{i}|\cJ_{j}^{(k)}(\{a,2a+1,\ldots\})|$
    
\end{proof}

\begin{proposition}
    A sufficient condition for the ultimately completeness of a set $\cF_{k}(A_{\infty})$ is that
    all of the following conditions hold. 
    \begin{enumerate}[(1)]
        \item $A\in \cA_{k}$.
        \item $A\cup \{1\}\not \in \cA_{k-1}$.
        \item For some $\{y_{a}\}_{a\in A}\in \bbZ^{A}$ and some $w_{1}\in\bbZ$, 
        %$\{y_{a}\}_{a\in A\cup \{1\}}\in \bbZ^{A\cup \{1\}}$, 
        \begin{eqnarray*}%\label{eq:cont_ext1}
    %\left\{
    %\begin{array}{cc}
    \sum_{a\in A} y_{a}^2+w_{1}^2 \leq k-2,\qquad
    \sum_{a\in A} y_{a}a+w_{1}=0,\qquad w_{1}= 1. %\\
    % \sum_{a\in A\cup\{1\}} y_{a}^2 \leq k-2,\qquad
    % \sum_{a\in A\cup\{1\}} y_{a}a=0,\qquad y_{1}= 1. 
    %\end{array}
    %\right.
    \end{eqnarray*}
    \end{enumerate}

    % If $A\in \cA_{k}$ and $A\cup \{b\}\not \in \cA_{k-1}$ for all $b\in\{1,\ldots,\lfloor\sqrt{k-2}\rfloor\}$,
    % then $\cF_{k} (A_{\infty})$ is ultimately complete.
\end{proposition}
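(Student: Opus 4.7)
The plan is to reduce ultimate completeness of $A':=\cF_{k}(A_{\infty})$ to showing that the complement $\bbN\setminus A'$ is, beyond some threshold, contained in the ``forbidden'' part of the construction. Concretely, $\bbN\setminus A'=B\cup F$ where $B$ is the auxiliary skip set and $F:=\bigcup_{n}\cJ^{(k)}(A_{n-1})\setminus(A'\cup B)$; by the monotonicity $A_{n-1}\subseteq A'\cap[1,a_{n}-1]$ one immediately gets $F\subseteq\bigcup_{i}\cJ_{i}^{(k)}(A')$, so it will suffice to show that $B$ is finite, i.e.\ that the characteristic sequence $A_{\infty}$ carries only finitely many zeros.

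The lever is the identity extracted from condition~(3), namely $1+\sum_{a\in A}y_{a}a=0$ with $\sum_{a\in A}y_{a}^{2}\leq k-3$. Granted that the original $A$ embeds into $A'$, the key calculation is: for any $c>\max A$ with $c-1$ already in the partial set $A_{n-1}$, the choice $y_{c}=1$, $y_{c-1}=-1$, combined with the $y_{a}$ from~(3), yields
\[
y_{c}\cdot c+y_{c-1}\cdot(c-1)+\sum_{a\in A}y_{a}\cdot a=1+\sum_{a\in A}y_{a}a=0,
\]
with total squared norm $\leq 2+(k-3)=k-1<k$. Hence $c\in\cJ^{(k)}(A_{n-1})$, so $c$ is already forbidden when the algorithm would consider it and never enters $B$. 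Once this one-step forbiddance is established for every $c$ with $c-1\in A'$, the only remaining members of $B$ beyond $\max A$ can sit in gaps of $A'$ of width at least $2$.

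The main obstacle will be to rule out such wider gaps, equivalently to show that $A_{\infty}$ is cofinite (so $A'$ is eventually interval-dense). The naive amplification of the identity by an integer $m$ blows up the squared norm by a factor of $m^{2}$ and fails for $k\geq 7$, so one cannot cover gaps of width $\geq 2$ by scaling alone. Instead one has to exploit the dynamics of $\cG_{k}$ itself: condition~(1) keeps every element of $A$ out of the $*$-set under any iteration of $\cG_{k}$ (this is what secures the embedding $A\subseteq A'$), and condition~(2) pins $1$ as definitively excluded, forcing the compression built into $\cG_{k}$ to shorten every sufficiently long gap by at least one position per iteration. Demonstrating that these two effects jointly drive $\cG_{k}^{n}(A)$ to a cofinite limit is the delicate step; once it is in place, the algebraic substitution above closes the argument in one line.
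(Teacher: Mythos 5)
Your central computation is the same one the paper relies on: from condition (3) one extracts $1+\sum_{a\in A}y_{a}a=0$ with $\sum_{a\in A}y_{a}^{2}\leq k-3$, and then the extra coefficients $y_{c}=1$, $y_{c-1}=-1$ give a relation of squared norm at most $2+(k-3)=k-1<k$ witnessing $c\in\cJ^{(k)}$ whenever $c-1$ lies in the current set. That step is correct and is exactly the paper's substitution $y_{a_{i}}=1$, $y_{a_{i}+1}=-1$ applied to the identity $a_{i}+1=a_{i}-\sum_{j}y_{a_{j}}a_{j}$.

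The gap is the one you name yourself and then do not close: the one-step forbiddance only disposes of a missing integer $c$ whose predecessor $c-1$ is already present, so you still must show that beyond some threshold every gap has width one --- equivalently, that your skip set $B$ is finite, or that $A_{\infty}$ is eventually interval-dense. Your proposed mechanism (that $\cG_{k}$ ``shortens every sufficiently long gap by at least one position per iteration,'' with conditions (1) and (2) securing the embedding $A\subseteq A'$ and the exclusion of $1$) is stated as a hope, not proved: you give no argument that deleting the $*$'s under $\cG_{k}$ creates new adjacent pairs rather than merely translating a wide gap leftward, nor do you control how the limit $A_{\infty}$ of the $\cG_{k}$-orbit interacts with the subsequent application of $\cF_{k}$. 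So the proposal is not a complete proof. For what it is worth, the paper's own proof has the same shape and the same hole: it reduces the claim to showing that the differences $a_{i+1}^{(n)}-a_{i}^{(n)}$ strictly decrease to $1$, performs the identical algebraic substitution, and then breaks off without establishing that strict decrease. You have reproduced the paper's argument up to the point where the paper itself stops; neither version supplies the missing dynamical lemma.
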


\begin{proof}
    % Notice that $b\not\in A$ for all $b\in\{1,\ldots,\lfloor\sqrt{k-2}\rfloor\}$
    % There exists $(y_{a})_{a\in A\cup \{b\}}$ such that 
    % \[
    % y_{b}\cdot b+\sum_{a\in A}y_{a}a=0, \quad \sum_{a\in A\cup\{b\}}y_{a}^2\leq k-2, \quad y_{b}\neq 0.
    % \]
    From Conditions (1) and (2), we have $1\not\in A$. It is clear that Condition~(3) implies Condition (2), but the converse is not always true since there is some possibility that $y_{1}$ takes another non-zero integer.  
    To show the ultimately completeness of a set $\cF_{k}(A_{\infty})$, 
    it is sufficient to prove that for all sufficiently large~$i$, differences $a_{i+1}^{(n)}-a_{i}^{(n)}$ goes to $1$ as $n\to\infty$.
    To do this, we show that $a_{i+1}^{(n+1)}-a_{i}^{(n+1)}<a_{i+1}^{(n)}-a_{i}^{(n)}$.
    From the second and last conditions in (3), 
    %Solving the first equation in $1$ in the above, 
    we have
    \begin{equation}\label{eq:rep_of_1}
    1=-\sum_{a\in A}y_{a}a.
    \end{equation}
    Put $i_{0}=\max\{i\colon y_{a_{i}}\neq 0\}$. 
    When $i>i_{0}$, we add $a_{i}$ in both sides of \eqref{eq:rep_of_1}. 
    Hence, we have
    \[
    a_{i}+1\: =\: a_{i}-\sum_{j=1}^{i_{0}}y_{a_{j}}\hspace{-1pt}a_{j}.
    \]
    %From the first condition in (3), 
    % \[
    % a_{s}+1=a_{s}-\sum_{a\in A}y_{a}a/y_{1}.
    % \]
    By introducing $w_{a_{i}}=1$, we have 
    \[
    a_{i}+1\: =\: w_{a_{i}}\hspace{-1pt}a_{i}-\sum_{j=1}^{i_{0}}y_{a_{j}}\hspace{-1pt}a_{j}.
    \]
    %where $y_{a_{s}}=1$.
    % If $y_{1}\neq \pm 1$, then 
    % \[
    % a_{s}+1=(y_{a_{s}}a_{s}-\sum_{a\in A}y_{a}a)/y_{1},
    % \]
    % where
    Also, by introducing $y_{a_{i}+1}=-1$, the above identity can be reduced to condition in \eqref{eq:cont_ext1}.
    The right hand side tells us that the deletion of $*$'s always appears until the difference $a_{i+1}^{(n)}-a_{i}^{(n)}$ for some $i$ is greater than $1$.    
\end{proof}
% We obtain the following proposition.
% \begin{proposition}\vspace{0pt}
% Let $A\in \cA_{k}$ be ultimately complete w.r.t an operator $\cJ^{(k)}$. %Assume $k\geq 7$.\\
% Then 
% there exists a limit 
% $A_{\infty}:=\lim_{n\to\infty}\cG_{k}^{n}A$
% such that 
%for some $n_{0}\geq 0$, we obtain a partition $A_{\infty}=(A_{\infty}\cap\bbN_{\leq n_{0}})\cup \bbN_{> n_{0}}$. 
% Especially, $A_{\infty}\cap\bbN_{\leq n_{0}}$ satisfies $\cF_{k}(A_{\infty}\cap\bbN_{\leq n_{0}})=A_{\infty}\cap\bbN_{\leq n_{0}}$, i.e. $A_{\infty}\cap\bbN_{\leq n_{0}}$ is a fixed point of $\cF_{k}$.
%For every $A\subseteq\bbN$, 
%there exists a positive integer $n_{0}$ such that the only $n_{0}$ distinct patterns appear in the first finite iterations $\{{\cG_{k}}^{n} A\}_{n=0}^{n_{0}-1}$. Hence, we have ${\cG_{k}}^{n}A={\cG_{k}}^{n-1}A$ for all $n\geq n_{0}$.
% \end{proposition}

% \section{}

\subsection{A Family of Coprime Sets}\label{subsec:coprime}
A set $A\subseteq\bbN$ is coprime if the greatest common divisor of any two distinct $a,b\in A$ is $1$. In this subsection, let $\cA$ be the family of coprime subsets of $\bbN$. 
Now we consider the two mappings $\cF$ and $\cG$ as in Section \ref{sec:GenFramework}
how we construct the family of coprime sets of $\bbN$. 
Let $\cP$ denote the set of prime numbers and 
$\PF(A)$ the union of sets of positive prime factors for every $a\in A$, i.e.
\[
\PF(A):=\bigcup_{a\in A}\{p\in \cP\colon p\mid a\}=\{p\in \cP\colon \text{$p\mid a$ for some $a\in A$}\}.
\]
To consider the mapping $\cF$, 
we define an operator $\cJ$ by
\[
\cJ(A):=\{a\in \bbN \colon p\mid a\;\text{for some}\; p \in \PF(A)\}.
\]
As in \eqref{eq:cJi},
we can also define $\cJ_{i}(A)$ for a sequence $A=\{a_{1}<a_{2}<\cdots\}$.
It is easy to see that the set $\cP$ is a coprime set. 
Moreover, we can show that $\cF(0\|\bm{1}_{\infty})=\cP$, 
where $0\|\bm{1}_{\infty}=011\ldots$ is the concatenation of $0$ and the all-one infinite \mbox{sequence $\bm{1}_{\infty}$}. 
By the way, we can also show that $\cF\,(\bm{1}_{\infty})=\{1\}\cup\cP$.
Hence, we conclude that the procedure $\cF\colon 0\|\bm{1}_{\infty}\mapsto \cP$ can be regarded as 
the sieve of \mbox{Eratosthenes} 
and the procedure $\cF\colon\sigma\mapsto \cF(\sigma)$ for any other $\sigma\in \{0,1\}^{\bbN}$ 
can be viewed as a generalization of the sieve of Eratosthenes 
(cf. \cite{Sieve_Greaves_Book2001}).

 \section{Concluding Remarks}
 \label{sec:ConcludingRemarks-}

As a complementary case of the condition (I), 
we may consider $2^{\bbN}\setminus \cA$ or its subfamily. 
For a statement 
that a system of linear equations over $\bbZ$  has a solution in $A\subseteq\bbN$, 
assume that $\cR\subseteq2^{\bbN}$ is a family of sets that satisfy this statement.
Then such a kind of families $\cR$ involves Schur's theorem \cite{Schur1917}, van der Waerden's theorem \cite{van_der_Waerden_AP_1927}, and Rado's theorem \cite{rado1933}.
In this case, it is matched with Ramsey Theory and the theory of ultrafilters on $\bbN$ (e.g. see 
\cite{graham1991ramsey}).
Ultrafilters are useful for the complementary case. 
However, we think that there is none corresponding to ultrafilters for the condition (I) case. 
So we expect a new theory with respect to the condition (I).

\printbibliography

\end{document}